\theoremstyle{plain}
\newtheorem{theorem}{Theorem}
\newtheorem{proposition}[theorem]{Proposition}
\theoremstyle{definition}
\theoremstyle{remark}
\numberwithin{equation}{section}
\renewcommand{\leq}{\leqslant}
\renewcommand{\geq}{\geqslant}
\newcommand{\comment}[1]{}
\newcommand{\R}{\mathbb{R}}
\begin{document}

\title[A strong-type Furstenberg--S\'{a}rk\"{o}zy theorem]{A strong-type Furstenberg--S\'{a}rk\"{o}zy theorem for sets of positive measure}

\author[P. Durcik]{Polona Durcik}
\address{Polona Durcik\\
        Schmid College of Science and Technology\\
        Chapman University\\
        One University Drive\\
        Orange, CA 92866, USA}
\email{durcik@chapman.edu}

\author[V. Kova\v{c}]{Vjekoslav Kova\v{c}}
\address{Vjekoslav Kova\v{c}\\
        Department of Mathematics, Faculty of Science\\
        University of Zagreb\\
        Bijeni\v{c}ka cesta 30\\
        10000 Zagreb, Croatia}
\email{vjekovac@math.hr}

\author[M. Stip\v{c}i\'{c}]{Mario Stip\v{c}i\'{c}}
\address{Mario Stip\v{c}i\'{c}\\
        Schmid College of Science and Technology\\
        Chapman University\\
        One University Drive\\
        Orange, CA 92866, USA}
\email{stipcic@chapman.edu}

\subjclass[2020]{Primary
28A75; 
Secondary
42B25} 


\begin{abstract}
For every $\beta\in(0,\infty)$, $\beta\neq 1$ we prove that a positive measure subset $A$ of the unit square contains a point $(x_0,y_0)$ such that $A$ nontrivially intersects curves $y-y_0 = a (x-x_0)^\beta$ for a whole interval $I\subseteq(0,\infty)$ of parameters $a\in I$.
A classical Nikodym set counterexample prevents one to take $\beta=1$, which is the case of straight lines.
Moreover, for a planar set $A$ of positive density we show that the interval $I$ can be arbitrarily large on the logarithmic scale.
These results can be thought of as Bourgain-style large-set variants of a recent continuous-parameter S\'{a}rk\"{o}zy-type theorem by Kuca, Orponen, and Sahlsten.
\end{abstract}

\maketitle


\section{Introduction}

Geometric measure theory often tries to identify patters in sufficiently large, but otherwise arbitrary, measurable sets. Recently, nonlinear or curved patterns have begun to attract much attention \cite{Bou88,HLP16,DGR19,CDR21,CGL21,FGP22,Kov20,KOS21,KMPW22,DR22}; most of these references will be discussed below. In this note we follow one of the many opened lines of research. 

Kuca, Orponen, and Sahlsten \cite{KOS21} showed that there exists $\varepsilon>0$ with the following property: every compact set $K\subseteq\mathbb{R}^2$ with Hausdorff dimension at least $2-\varepsilon$ necessarily contains a pair of points of the form
\begin{equation}\label{eq:pointparab1}
(x,y), \ (x,y) + (u,u^2)
\end{equation}
for some $u\neq0$. We can imagine that we started from a point $(x,y)\in K$, translated the parabola $v=u^2$ so that its vertex falls into $(x,y)$, and moved along that parabola to find another point in the set $K$; see Figure~\ref{fig:oneparab}.
Their result can be thought of as a continuous-parameter analogue of the classical Furstenberg--S\'{a}rk\"{o}zy theorem \cite{Fur77,Sar78}, on $\mathbb{R}^2$ instead of $\mathbb{Z}$.
Parabola cannot be replaced with a vertical straight line (see the comments in \cite{KOS21}); curvature is crucial.

\begin{figure}[ht]
\includegraphics[width=0.6\linewidth]{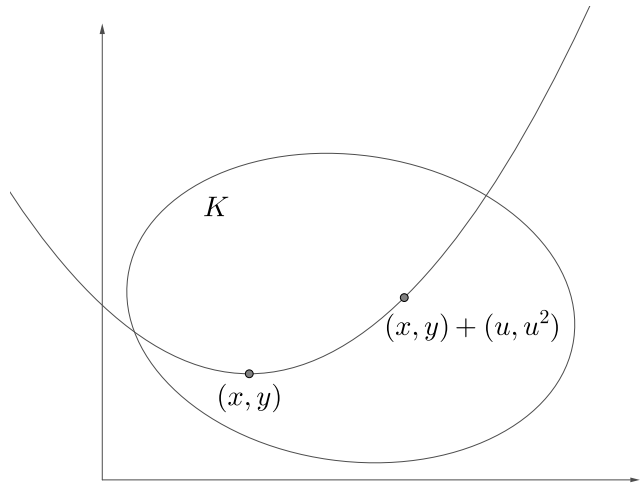}
\caption{The two-point pattern inside the set.}
\label{fig:oneparab}
\end{figure}

The authors of \cite{KOS21} mention that a set $A\subseteq[0,1]^2$ of Lebesgue measure at least $0<\delta\leq1/2$ contains a pair of points \eqref{eq:pointparab1} that also satisfy the \emph{gap bound} 
\[ |u|\geq\exp(-\exp(\delta^{-C})) \] 
for some absolute constant $C$.
This property is seen either by an easy adaptation of Bourgain's argument from \cite{Bou88} for quadratic progressions 
\[ x, \ x+z, \ x+z^2, \]
or by merely considering the last two points of the three-point quadratic \emph{corner} 
\[ (x,y), \ (x+z,y), \ (x,y+z^2), \]
studied by Christ, Roos, and one of the present authors \cite[Theorem~4]{CDR21}.
A gap bound is needed in order to have a nontrivial result, as the Steinhaus theorem would identify sufficiently small copies of any finite configuration inside a set of positive measure.
More on polynomial patterns like these can be found in recent preprints \cite{KMPW22} and \cite{DR22}.

\smallskip
It is natural to wonder if sets $A\subseteq[0,1]^2$ of positive measure also possess some stronger property of the Furstenberg--S\'{a}rk\"{o}zy type.
For instance, we can consider many parabolas $v=au^2$ with their vertex translated to the point $(x,y)$.
Reasoning from the previous paragraph applies equally well for any fixed $a>0$ to the vertically scaled set, giving a well-separated pair of points 
\begin{equation}\label{eq:pointparabmany}
(x,y), \ (x,y)+(u,au^2)
\end{equation}
in the set $A$.
However, it is not   obvious if there exists a common starting point $(x,y)\in A$ from which we could move along ``many'' parabolas and always find points in the set $A$; see Figure~\ref{fig:manyparab}. 
This is the content of our main theorem below and here by \emph{many} we mean a whole ``beam'' of parabolas with parameter $a$ running over a non-degenerate interval $I$.
In fact, a parabola can be replaced with any power curve $v=au^\beta$, for a fixed $\beta\neq 1$ and a varying $a>0$.

\begin{figure}[ht]
\includegraphics[width=0.6\linewidth]{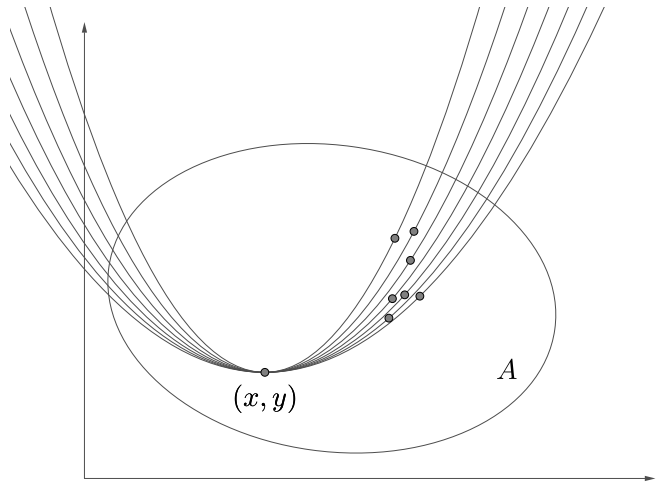}
\caption{Points in the set along many parabolas.}
\label{fig:manyparab}
\end{figure}

Here is the main result of the paper.
Let $|E|$ denote the Lebesgue measure of a measurable set $E\subseteq\mathbb{R}^2$.

\begin{theorem}\label{thm:main1}
For a given $\beta\in(0,\infty)$, $\beta\neq 1$ there exists a finite constant $C\geq1$ with the following property: for every $0<\delta\leq1/2$ and every measurable set $A\subseteq[0,1]^2$ of Lebesgue measure $|A|$ at least $\delta$ there exist a point $(x,y)\in A$ and an interval $I\subseteq(0,\infty)$ such that
\[ \exp(-\delta^{-C})\leq \inf I < \sup I\leq \exp(\delta^{-C}), \]
\[ |I| \geq \exp(-\delta^{-C}), \]
and that for every $a\in I$ the set $A$ intersects the arc of the power curve
\[ \big\{ (x,y) + \big(u, a u^\beta\big) : \exp(-\delta^{-C})\leq u\leq \exp(\delta^{-C}) \big\}. \]
\end{theorem}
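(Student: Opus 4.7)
The strategy is a Bourgain-style continuous S\'ark\"ozy-type estimate made uniform in the parameter $a$, combined with a continuity argument in $a$ to extract the desired interval. I first reduce to $\beta \in (0,1)$: transposition of coordinates sends the curve $v = au^\beta$ to $u = a^{-1/\beta}v^{1/\beta}$, a power curve of exponent $1/\beta \in (0,1)$, and maps an interval of $a$-parameters to an interval of reciprocals. Fix large constants $C_1 > C_0$, set $M := \delta^{-C_0}$ and $\varepsilon := \exp(-\delta^{-C_1})$, and let $f := \mathbf{1}_A \ast \eta_\varepsilon$ be the mollification by a smooth bump at scale $\varepsilon$, bounded by $1$ with $\int f \geq \delta$. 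Fix a nonnegative smooth bump $\phi$ supported on a log-dyadic subinterval of $[e^{-M},e^M]$ and define
\[
F(x,y,a) \;:=\; \int f(x+u,\,y+au^\beta)\,\phi(u)\,du.
\]

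The first key step is to prove a uniform-in-$a$ lower bound
\[
\int_{\mathbb{R}^2} \mathbf{1}_A(x,y)\,F(x,y,a)\,dx\,dy \;\geq\; c(\delta) \qquad \text{for all } a \in J := [e^{-M},e^M],
\]
with $c(\delta) \geq \exp(-\delta^{-C_2})$ for some $C_2$. This should follow from a Bourgain-style Fourier-analytic argument for two-point quadratic-type patterns (as in \cite{Bou88} and adapted in \cite{CDR21}); uniformity in $a$ is arranged by the rescaling $u \mapsto a^{-1/\beta}u$, which absorbs the $a$-dependence into the shape and support of the bump. Integration over $a \in J$, Fubini, and a standard pigeonhole then produce a base point $(x_0,y_0) \in A$ and a value $a_0 \in J$ with $F(x_0,y_0,a_0) \geq c(\delta)$. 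Since $f$ is $\varepsilon^{-1}$-Lipschitz, a direct computation gives $|\partial_a F(x_0,y_0,a)| \lesssim \varepsilon^{-1} e^{\beta M}$, so $F(x_0,y_0,\cdot) \geq c(\delta)/2$ on an interval $I \ni a_0$ of length $\gtrsim c(\delta)\,\varepsilon\,e^{-\beta M}$. With $\beta < 1$ and a judicious choice of the constants $C_0, C_1, C_2, C$, this length is bounded below by $\exp(-\delta^{-C})$.

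The main obstacle lies in converting positivity of the smoothed quantity $F(x_0,y_0,a)$ on $I$ into genuine intersection of $A$ with the $a$-power curves from $(x_0,y_0)$. For each $a \in I$, $F(x_0,y_0,a) > 0$ only supplies a point of $A$ within $\varepsilon$ of the $a$-curve, and such a point lies exactly on the $a'$-curve for some $a' = a + O(\varepsilon e^{2M})$; so the argument a priori produces only a dense-with-gaps subset of realized parameters, not an honest subinterval. To remedy this, I would run the argument with $A$ replaced by a slight open thickening $A^* \supseteq A$ at a scale $\rho \ll \varepsilon$: the analogue of $F$ built from $\mathbf{1}_{A^*}$ is lower semicontinuous in $a$ (since $A^*$ is open and translations of the curves depend continuously on $a$), so its positivity set is automatically an open subset of $J$ and yields a genuine interval on which $A^*$ meets each $a$-curve; one then transfers back to $A$ by perturbing both the base point and the realized $a$-values, introducing an additional $O(\rho e^{2M})$ error that must be absorbed into a slightly smaller subinterval of $I$. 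Arranging the scales $\rho \ll \varepsilon$ and $\rho e^{2M} \ll \exp(-\delta^{-C})$ simultaneously with the S\'ark\"ozy bound is the principal technical difficulty, and the separation $\beta \neq 1$ (equivalently, after reduction, $\beta < 1$) is exactly what makes this balance feasible; the excluded case $\beta=1$ corresponds to straight lines, where the Nikodym counterexample precludes any such statement.
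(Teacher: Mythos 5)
Your route is genuinely different from the paper's (mollification plus a Lipschitz-in-$a$ continuity argument, versus maximal functions applied directly to $\mathbbm{1}_A$), but there is a gap at exactly the step you flag as the principal difficulty: converting positivity of the smoothed count $F(x_0,y_0,a)$ into honest intersection of $A$ with each $a$-curve. The proposed repair via an open $\rho$-thickening $A^\ast\supseteq A$ does not close it. Lower semicontinuity does make the positivity set of the smoothed quantity built from $\mathbbm{1}_{A^\ast}$ open, so you obtain an interval $I$ of parameters for which $A^\ast$ meets the $a$-curve through $(x_0,y_0)$; but for each such $a$ this only furnishes a point of $A$ within distance $\rho$ of the curve, hence a point of $A$ lying exactly on an $a'$-curve through $(x_0,y_0)$ with $|a'-a|=O(\rho\, e^{2M})$. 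The set of realized parameters $a'$ is therefore only $O(\rho\, e^{2M})$-dense in $I$, not a subinterval; shrinking $\rho$ shrinks the gaps but does not remove them, and perturbing the base point produces an $a$-dependent base point rather than a single $(x_0,y_0)\in A$. This is the same ``dense-with-gaps'' obstruction you correctly identified for $F$ itself, merely pushed down to scale $\rho$.

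The paper avoids the issue entirely by never mollifying $\mathbbm{1}_A$ and by arguing through the \emph{infimum} over a scale block. Proposition~\ref{prop:main} bounds the number $J$ of disjoint dyadic blocks $[c_j,b_j]$ for which $\inf_{t\in[c_j,b_j]}(\tilde{\sigma}_t\ast\mathbbm{1}_A)(x,y)=0$ at \emph{every} point $(x,y)\in A$. Taking $J>\delta^{-C'}$ and contraposing gives some $(x,y)\in A$ and some block on which this infimum is strictly positive; then $(\tilde{\sigma}_t\ast\mathbbm{1}_A)(x,y)>0$ for \emph{every} $t$ in the block, so by \eqref{eq:convo1} the set $A$ meets the $t$-curve for every such $t$, and the change of variables $a=t^{1-\beta}$ produces the interval of parameters. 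No continuity-in-$t$ or mollification is needed: a positive infimum already guarantees curve intersection at every $t$. The price is the harder analytic input used to prove Proposition~\ref{prop:main}: a five-term decomposition of $\tilde{\sigma}_t\ast g$ against martingale and Poisson averages, together with Bourgain's $\textup{L}^p$, $p>2$, maximal inequalities for generalized circular maximal functions and Littlewood--Paley/Calder\'{o}n square-function bounds. Your S\'{a}rk\"{o}zy-type count for the bilinear quantity is in the right spirit, but to recover the theorem you would need a sup-in-$t$ (maximal) version of it against the raw indicator, which is precisely what Proposition~\ref{prop:main} supplies and what the Lipschitz-in-$a$ argument cannot replace.
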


The following short argument shows that Theorem~\ref{thm:main1} fails in the limiting case $\beta=1$, i.e., when the power curves are replaced with straight lines through $(x,y)$.
Let $N\subseteq[0,1]^2$ be a \emph{Nikodym set}, which is a set of full Lebesgue measure such that through every point of $N$ one can draw a line that intersects $N$ only at a single  point; let us call such lines \emph{exceptional}.
If $\mathcal{R}_{\alpha}\colon\mathbb{R}^2\to\mathbb{R}^2$ denotes the rotation about  the point $(1/2,1/2)$ by the angle $\alpha$, while $\mathcal{D}_c\colon\mathbb{R}^2\to\mathbb{R}^2$ denotes the dilation centered at $(1/2,1/2)$ by the factor $c>0$, then
\begin{equation}\label{eq:Nikodym_dilates}
A := \bigg( \bigcap_{\alpha\in[0,2\pi)\cap\mathbb{Q}} \mathcal{D}_{\sqrt{2}}\mathcal{R}_{\alpha} N \bigg) \cap [0,1]^2
\end{equation}
is a Nikodym set such that its exceptional lines determine a dense set of directions through each of its points. In particular, there can be no beam of lines 
\[ \big\{(x,y) + (u,au) : u\in\mathbb{R}\big\}, \quad a\in I, \quad I\subseteq(0,\infty) \text{ an interval}, \]
through any point $(x,y)\in A$ that would non-trivially intersect $A$ for each $a\in I$, as required in Theorem~\ref{thm:main1}. 
In fact, Davies \cite{Dav52} has already constructed a Nikodym set whose exceptional lines though each of its points form both dense and uncountable sets of directions.
On the other hand, if we repeat the simple construction \eqref{eq:Nikodym_dilates} starting with a Nikodym-type set found by Chang, Cs\"{o}rnyei, H\'{e}ra, and Keleti \cite[Corollary~1.2]{CCHK18}, then we can also rule out curves composed of countably many pieces of straight lines.

\smallskip
Finally, it is also legitimate to ask if an even stronger result holds for ``really large'' sets, namely for the sets $A\subseteq\mathbb{R}^2$ that occupy a positive ``share'' of the plane. Recall that the \emph{upper Banach density} of a measurable set $A$ is defined as
\[ \overline{\delta}(A) := \limsup_{R\to\infty} \sup_{(x,y)\in\mathbb{R}^2} \frac{\big|A\cap\big([x-R,x+R]\times[y-R,y+R]\big)\big|}{4R^2}. \]

\begin{theorem}\label{thm:main2}
For a given $\beta>1$ (resp.\@ $0<\beta<1$) and a measurable set $A\subseteq\mathbb{R}^2$ with $\overline{\delta}(A)>0$ there is a number $a_0\in(0,\infty)$ with the following property: for every $a_1$ satisfying $0<a_1<a_0$ (resp.\@ $a_1>a_0$) there exist a point $(x,y)\in A$ such that for every $a\in\mathbb{R}$ satisfying $a_1\leq a\leq a_0$ (resp.\@ $a_0\leq a\leq a_1$) the set $A$ intersects the power curve
\[ \big\{ (x,y) + \big(u, a u^\beta\big) : u\in(0,\infty) \big\}. \]
\end{theorem}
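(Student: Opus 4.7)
The plan is to exploit Theorem~\ref{thm:main1} at many scales simultaneously, reducing Theorem~\ref{thm:main2} to the statement that for each \emph{fixed} parameter $a$ the set of ``bad'' starting points has upper Banach density zero, and then producing a single starting point that works for every $a\in[a_1,a_0]$. I assume throughout that $\beta>1$; the case $0<\beta<1$ is handled analogously by exchanging the two coordinates (this sends the curve $v=au^\beta$ to $v=a^{-1/\beta}u^{1/\beta}$, interchanging the two regimes). A useful preliminary observation is that $\overline\delta(A)=\delta>0$ implies, by a pigeonhole inside any large square where Banach density is almost realised, that at \emph{every} scale $R>0$ some square of side $R$ carries at least $\delta/2$ density of $A$.

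For each admissible $a>0$ I would choose a ``witness window'' $[M_1(a),M_2(a)]\subseteq(0,\infty)$, depending continuously on $a$, together with a matched scale $R(a)$ so that a single application of Theorem~\ref{thm:main1} inside an $R(a)$-square of density $\geq\delta/2$, rescaled to $[0,1]^2$, produces witnesses $u\in[M_1(a),M_2(a)]$ and an interval of admissible parameters that contains $a$. The key lemma is that for each such $a$ the ``bad'' set
\[
B_a := \bigl\{(x,y)\in A : (x+u,\,y+au^\beta)\notin A \text{ for every } u\in[M_1(a),M_2(a)]\bigr\}
\]
satisfies $\overline\delta(B_a)=0$: if not, applying Theorem~\ref{thm:main1} to $B_a$ itself at the scale $R(a)$ would produce two points of $B_a\subseteq A$ in the very configuration forbidden by the definition of $B_a$, a contradiction.

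The main task is then to pass from this density-zero fact for each individual $a$ to a single point that works for \emph{every} $a\in[a_1,a_0]$. A naive union bound over a countable dense $D\subseteq[a_1,a_0]$ fails because upper Banach density is not countably subadditive (consider $A_n=\{(x,y):\lfloor x\rfloor=n\}$: each has density zero while the union is all of $\mathbb{R}^2$). I would therefore invoke the Furstenberg correspondence principle, realising $A$ as a clopen set $B\subseteq X$ of measure $\delta$ inside a probability-measure-preserving $\mathbb{R}^2$-system $(X,\mu,T)$ whose Borel-configuration measures are bounded above by the corresponding upper Banach densities in $A$. The key lemma becomes
\[
\mu\bigl(\{x\in B : T_{(u,au^\beta)}x\notin B \text{ for all } u\in[M_1(a),M_2(a)]\}\bigr)=0
\]
for each $a$, and \emph{countable} subadditivity of $\mu$ then yields a full-measure subset of $B$ whose every element admits a witness simultaneously for every $a\in D$.

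Finally, I would upgrade from the dense $D$ to all of $[a_1,a_0]$: since $B$ is clopen in $X$ and the witness range $[M_1(a),M_2(a)]$ is compact and depends continuously on $a$, for fixed $x$ the set of $a\in[a_1,a_0]$ admitting such a witness is closed; a closed subset of $[a_1,a_0]$ containing a dense set is the whole interval. Transferring back via the correspondence principle produces a set of positive upper Banach density in $A$ of admissible starting points, which in particular is nonempty. The main obstacle I expect is engineering the witness windows $[M_1(a),M_2(a)]$ and scales $R(a)$ so that the $R$-ranges coming from the $a$- and $u$-constraints in Theorem~\ref{thm:main1} actually overlap (this is precisely where the hypothesis $\beta\neq 1$ enters) and so that these windows vary continuously in $a$; the failure of countable subadditivity of $\overline\delta$ also forces us through the correspondence principle rather than reasoning directly in $\mathbb{R}^2$.
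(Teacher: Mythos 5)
Your reduction to the ``key lemma'' --- that for each fixed $a$ the bad set $B_a$ has zero upper Banach density --- does not follow from Theorem~\ref{thm:main1} in the way you claim, and this gap is fatal to the whole plan. Theorem~\ref{thm:main1}, applied to a rescaled copy of $B_a$ inside a square of side $2R$, produces \emph{some} nonempty interval $I\subseteq[\exp(-\delta^{-C}),\exp(\delta^{-C})]$ of admissible parameters, but the location of $I$ inside that range is an \emph{output} of the theorem, depending on the set in an uncontrolled way. After un-rescaling, the admissible parameters become $I/(2R)^{\beta-1}$, and there is no way to choose $R$ (or the ``witness window'' $[M_1(a),M_2(a)]$) so as to force the prescribed $a$ to land in $I/(2R)^{\beta-1}$, precisely because you do not know $I$ until after $R$ and the square are fixed. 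In short, Theorem~\ref{thm:main1} asserts that \emph{some} short interval of parameters works, while your lemma needs a \emph{given} parameter to work; these are genuinely different. Beyond this, I do not see why the lemma should even be true: combined with your correspondence-plus-countable-union step it would give a.e.\@ starting point simultaneously good for a dense set of $a$'s, which is considerably stronger than anything the paper establishes (and the transfer back from the measure-preserving system to the physical set $A$ is itself nontrivial, since the good set in $X$ is an uncountable intersection of shifted copies of $B$ and hence not a cylinder set, and the Furstenberg correspondence controls only finite-intersection quantities).

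The paper's proof of Theorem~\ref{thm:main2} is structured quite differently and deliberately avoids isolating a single $a$. Negating the statement, one inductively extracts $J\approx\delta^{-C'}$ well-separated parameter intervals $[B_j,C_j]$ such that for \emph{every} point of $A$ \emph{some} $a\in[B_j,C_j]$ fails --- crucially, the failing $a$ is allowed to depend on the point, which matches the quantifier structure of Proposition~\ref{prop:main}. One then uses the upper Banach density to find a single large square where $A$ has density $\geq\delta$ at a scale at least as fine as the smallest $B_J$, rescales it to $[0,1]^2$, records the rescaled dyadic scales $b_j,c_j$, and contradicts Proposition~\ref{prop:main}. The argument is a pigeonhole over $J$ simultaneously failing scales inside a single rescaled set, not a union of single-scale density-zero statements; this multi-scale structure is exactly what your decomposition into $B_a$'s loses. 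It is also why the number $a_0$ in Theorem~\ref{thm:main2} is allowed to depend on $A$ rather than being universal.
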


In comparison with Theorem~\ref{thm:main1}, an improvement coming from Theorem~\ref{thm:main2} is in the fact that the interval $I=[a_1,a_0]$ (resp.\@ $I=[a_0,a_1]$) can have an arbitrarily small (resp.\@ large) left (resp.\@ right) endpoint $a_1$. It is not clear to us if the latter result also holds with $I=(0,\infty)$; this extension would probably be very difficult to prove. 
Our proof will rely on Bourgain's \emph{dyadic pigeonholing}   in the parameter $a$, and as such it is unable to assert anything for every single value of $a\in(0,\infty)$.
Thus, it is not coincidental that Theorem~\ref{thm:main2} is quite reminiscent of the so-called \emph{pinned distances theorem} of Bourgain \cite[Theorem~1']{Bou86}.
Our proof will closely follow Bourgain's proof of that theorem, replacing circles with arcs of the curves $v=a u^\beta$ and also invoking Bourgain's results on generalized circular maximal functions in the plane \cite{Bou86a}.

\smallskip
Theorems~\ref{thm:main1} and \ref{thm:main2} might also be interesting because they initiate the study of strong-type (a.k.a.\@ Bourgain-type) results for finite curved Euclidean configurations, asserting their existence in $A$ for a whole interval $I$ of parameters/scales. 
The two-point pattern \eqref{eq:pointparabmany} studied here could possibly be replaced with larger and more complicated configurations in the future.


\section{Analytical reformulation}

It is sufficient to study the case $\beta>1$.
Afterwards, one can cover $0<\beta<1$ simply by interchanging the roles of the coordinate axes and applying the previous case to $1/\beta$.
Note that all bounds formulated in Theorem~\ref{thm:main1} and the statement of Theorem~\ref{thm:main2} are sufficiently symmetric to allow such swapping.
Thus, let us fix the parameter $\beta\in(1,\infty)$.

It is geometrically evident that one can realize an arc of the power curve $v=u^\beta$ as a part of a smooth closed simple curve $\Gamma$, which has non-vanishing curvature and which is the boundary of a centrally symmetric convex set in the plane.
More precisely, take parameters $0<\eta<\theta$ such that
\[ \Big(\frac{\theta}{\eta}\Big)^\beta - \beta\frac{\theta}{\eta} < \beta-1. \]
Figure~\ref{fig:curveset} depicts how the arc
\begin{equation}\label{eq:thearc}
\big\{ (u,u^\beta) : \eta\leq u\leq\theta \big\} 
\end{equation}
can be extended by its tangents at the endpoints to a boundary of a centrally symmetric convex set. It is then easy to curve and smooth this boundary a little in order to make it $\textup{C}^\infty$ with non-vanishing curvature while still containing the above arc.
The trick of realizing a power arc as a part of the boundary of an appropriate centrally symmetric convex set with intention of applying Bourgain's results \cite{Bou86a} has already been used by Marletta and Ricci \cite[Section~1, p.~59]{MR98}.

\begin{figure}[ht]
\includegraphics[width=0.45\linewidth]{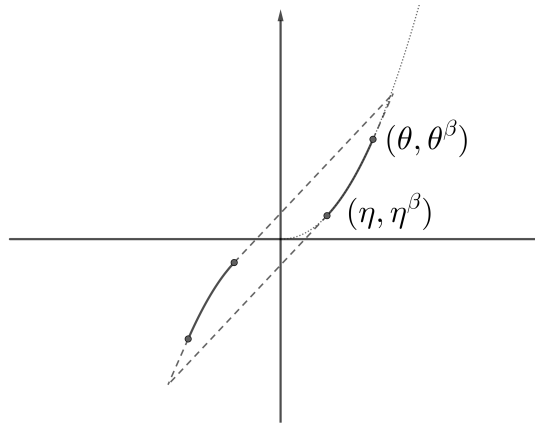}
\caption{The power arc, the reflected arc, and the tangents.}
\label{fig:curveset}
\end{figure}

Define $\nu$ to be the arclength measure of $\Gamma$. We can also parametrize the curve $\Gamma$ by arc length (i.e., traversing it at unit speed) as
\[ \Gamma = \{ (\gamma_1(s), \gamma_2(s)): s\in [0,L)\}, \] 
so that we have 
\[ \int_{\mathbb{R}^2} f(u,v) \,\textup{d}\nu(u,v) = \int_{0}^{L} f(\gamma_1(s), \gamma_2(s)) \,\textup{d}s \]
for every bounded measurable function $f$.
Now take a nonnegative smooth function $\Psi$ such that its support intersects $\Gamma$ precisely in the arc \eqref{eq:thearc}, and which is constant $1$ on a major part of that arc.  
Let $\sigma$ be the measure given by
\[ \textup{d}\sigma = \frac{\Psi\,\textup{d}\nu}{\int_\Gamma \Psi\,\textup{d}\nu}; \]
note that it is normalized as $\sigma(\mathbb{R}^2)=\sigma(\Gamma)=1$.
Then
\[ \int_{\mathbb{R}^2} f(u,v) \,\textup{d}\sigma(u,v) = \int_{\mathbb{R}} f(u,u^\beta) \psi(u) \,\textup{d}u \]
for every bounded measurable function $f$, where $\psi(u)$ is a constant multiple of 
\[ \Psi(u,u^\beta) (\gamma_1^{-1})'(u). \]
Thus, $\psi$ is a nonnegative $\textup{C}^\infty$ function whose support is contained in $[\eta,\theta]$.
All constants appearing in the proof are allowed to depend on $\Gamma,\beta,\eta,\theta,\Psi$ without further mention.

If $\sigma_t$ is the dilate of $\sigma$ by a number $t>0$, i.e., $\sigma_t(E):=\sigma(t^{-1}E)$, then we have
\[ \int_{\mathbb{R}^2} f(u,v) \,\textup{d}\sigma_{t}(u,v) = \frac{1}{t} \int_{\mathbb{R}} f\Big(u,\frac{u^{\beta}}{t^{\beta-1}}\Big) \psi\Big(\frac{u}{t}\Big)\,\textup{d}u, \]
so $\sigma_{t}$ is ``detects'' points on the curve $v=au^\beta$, where
\begin{equation}\label{eq:atsubst}
a = t^{1-\beta}.
\end{equation}
Finally, let $\tilde{\sigma}$ be the reflection of $\sigma$, i.e., $\tilde{\sigma}(E):=\sigma(-E)$.
Note that
\begin{equation}\label{eq:convo1}
\big(\tilde{\sigma}_t\ast f\big)(x,y) =  \frac{1}{t} \int_{\mathbb{R}} f\Big(x + u, y + \frac{u^{\beta}}{t^{\beta-1}}\Big) \psi\Big(\frac{u}{t}\Big)\,\textup{d}u.
\end{equation}

\smallskip
Both theorems will be consequences of the following purely analytical result.
Let $\mathbbm{1}_E$ denote the indicator function of a set $E\subseteq\mathbb{R}^2$.

\begin{proposition}\label{prop:main}
Take $0<\delta\leq1/2$ and a measurable set $A\subseteq[0,1]^2$ of measure $|A|\geq\delta$. Suppose that there exist dyadic numbers (i.e., elements of $2^\mathbb{Z}$)
\[ 1 > b_1 > c_1 > b_2 > c_2 > \cdots > b_J > c_J > 0 \]
having the property
\begin{equation}\label{eq:convo2}
\inf_{t\in[c_j,b_j]}\big(\tilde{\sigma}_t \ast \mathbbm{1}_A\big)(x,y)=0
\end{equation}
for every point $(x,y)\in A$ and every index $1\leq j\leq J$.
Then $J\leq \delta^{-C'}$ for some constant $C'\geq1$ independent of $\delta$ or $A$.
\end{proposition}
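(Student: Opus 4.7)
I would model the proof on Bourgain's pinned distance theorem \cite{Bou86}, replacing the spherical means by the curve averages $\tilde{\sigma}_t$. The main harmonic-analytic input is Bourgain's $L^p$ bound ($p>2$) for the maximal operator associated to any smooth, closed, convex curve of non-vanishing curvature \cite{Bou86a}; this applies to our $\Gamma$ from Section~2, as does the Fourier decay $|\widehat{\tilde{\sigma}}(\xi)|\lesssim (1+|\xi|)^{-1/2}$.

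The preparatory step upgrades the infimum condition \eqref{eq:convo2} to an exact zero. By dominated convergence applied to $f_t(x,y):=(\tilde{\sigma}_t\ast\mathbbm{1}_A)(x,y)=\int\mathbbm{1}_A(x+tv,y+tv^\beta)\psi(v)\,dv$, the function $t\mapsto f_t(x,y)$ is continuous on $(0,\infty)$ for a.e.\@ $(x,y)$, so after deleting a null subset of $A$ we obtain, for each $(x,y)\in A$ and each $j$, a value $t^*_j(x,y)\in[c_j,b_j]$ with $f_{t^*_j(x,y)}(x,y)=0$. Fixing $R>0$ so that all relevant arcs from $A$ at scales $t\le 1$ stay inside $[-R,R]^2$, and putting $h:=\mathbbm{1}_{[-R,R]^2}-\mathbbm{1}_A$ (so $\|h\|_{L^p(\mathbb{R}^2)}=O(1)$), the vanishing is equivalent to $(\tilde{\sigma}_{t^*_j(x,y)}\ast h)(x,y)=\int\psi=:c_\psi>0$ for every $(x,y)\in A$ and every $j$.

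Next I would carry out Bourgain's dyadic pigeonholing (explicitly foreshadowed in the introduction). Since each $[c_j,b_j]$ is a union of one or more consecutive unit dyadic intervals, and these are pairwise disjoint across $j$, pigeonholing inside each $[c_j,b_j]$ and then among the $j$'s, at a polynomial-in-$\delta$ cost, reduces the problem to $J_0\gtrsim J\delta^{O(1)}$ pairwise disjoint unit dyadic intervals $I_1<\dots<I_{J_0}$ and a single measurable subset $A_0\subseteq A$ with $|A_0|\ge\delta^{O(1)}$ on which $\sup_{t\in I_\ell}(\tilde{\sigma}_t\ast h)\ge c_\psi$ for every $\ell=1,\dots,J_0$.

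The heart of the argument, and the main obstacle, is to aggregate these $J_0$ single-scale lower bounds. The naive triangle inequality yields only $\|\sum_\ell\sup_{t\in I_\ell}\tilde{\sigma}_t\ast h\|_{L^p}\le J_0 C_p\|h\|_{L^p}=O(J_0)$, which is useless when pitted against the pointwise lower bound $\sum_\ell\sup_{t\in I_\ell}(\tilde{\sigma}_t\ast h)\ge c_\psi J_0$ on $A_0$. The remedy, as in Bourgain \cite{Bou86}, is to exploit the quasi-orthogonality across the dyadically separated scales $I_\ell$ --- supplied by the Fourier decay of $\widehat{\tilde{\sigma}}$ via Plancherel --- and to interpolate this $L^2$ gain with the $L^p$ maximal bound, producing a bound on the sum in $L^p(A_0)$ that loses only a polynomial-in-$\delta$ factor. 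Comparing with the pointwise lower bound then forces $J_0\le\delta^{-C''}$, whence $J\le\delta^{-C'}$. Implementing this $L^2$-to-$L^p$ interpolation without squandering the disjointness gain --- the technical centerpiece of Bourgain's original argument --- is precisely where I expect the bulk of the work to lie.
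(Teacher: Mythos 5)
Your framework --- a Bourgain-style argument built on his $\textup{L}^p$ ($p>2$) maximal bounds for convex curves --- is the right one, and replacing the paper's $\tau$-buffer by a fixed box $[-R,R]^2$ is a clean variant. The difficulty is the pigeonholing step, which has a genuine gap, and the central quantitative step, which you explicitly leave undone. For each $(x,y)\in A$ and each $j$, the hypothesis supplies some $t\in[c_j,b_j]$ with $(\tilde\sigma_t\ast h)(x,y)=c_\psi$, hence some unit dyadic interval $I\subseteq[c_j,b_j]$, \emph{but $I$ depends on $(x,y)$}, and $[c_j,b_j]$ can contain $m_j=\log_2(b_j/c_j)$ unit dyadic intervals with $m_j$ completely uncontrolled by $\delta$ (in the application to Theorem~\ref{thm:main2} the ratios $b_j/c_j$ are arbitrary). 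Fixing a single $I_j\subseteq[c_j,b_j]$ valid for a positive-measure subset of $A$ already costs a factor $1/m_j$, and demanding one $A_0$ on which the entire collection $I_1<\dots<I_{J_0}$ works simultaneously is a biclique-extraction problem whose naive cost is $\prod_j m_j^{-1}$. Neither is bounded below polynomially in $\delta$, so the claim $J_0\gtrsim J\,\delta^{O(1)}$ is unjustified; you cannot reduce to finitely many pairwise disjoint unit dyadic scales at polynomial cost. On top of this, you defer the $\textup{L}^2$ quasi-orthogonality and $\textup{L}^2$-to-$\textup{L}^p$ interpolation as ``where I expect the bulk of the work to lie,'' so the heart of the argument is not present --- and it is unclear that quasi-orthogonality survives the supremum over $t\in I_\ell$ inside each block.

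The paper sidesteps both problems by never reducing to single dyadic scales. For each $j$ it telescopes $\tilde\sigma_t\ast g$ (with $g=\mathbbm{1}_{[0,1]^2}-\mathbbm{1}_A$, \emph{uniformly} over $t\in[c_j,b_j]$) against a dyadic-martingale projection $\mathbb{E}_{k_j}$ at scale $\varrho c_j$ and Poisson averages $P_{\varrho c_j}$, $P_{\varrho^{-1}b_j}$ at the two endpoints. The high-frequency error is handled by Bourgain's decay estimate \cite[(10)]{Bou86a}, the intermediate pieces by the $\textup{L}^p$ maximal theorem applied to $\|P_{\varrho^{-1}b_j}g-P_{\varrho c_j}g\|_p$ and $\|P_{\varrho c_j}g-\mathbb{E}_{k_j}g\|_p$, and the low-frequency error by a direct kernel estimate of size $O(\varrho)$. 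The aggregation over $j$ comes not from Fourier quasi-orthogonality but from $\textup{L}^p$-boundedness of two square functions (the Littlewood--Paley Poisson square function, and the Calder\'{o}n / Jones--Seeger--Wright square function comparing Poisson averages with martingale projections), which control $\sum_j\|\cdot\|_p^p$ by $O((\log\varrho^{-1})^p)$ independently of $J$. A single pigeonhole over the index $j\in\{J_0+1,\dots,J\}$ --- not over dyadic subintervals of $[c_j,b_j]$ --- together with $\int f\cdot P_{\varrho^{-1}b_j}f\geq c_0\delta^2$ then closes the bound $J\leq\delta^{-C'}$.
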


Our main task is to establish Proposition~\ref{prop:main} and its proof will span over the next section.


\section{Proof of Proposition~\ref{prop:main}}

Let us write $A\lesssim B$ and $B\gtrsim A$ if the inequality $A\leq CB$ holds for a  constant $C\in(0,\infty)$. This constant $C$ is always understood to depend on $\Gamma,\beta,\eta,\theta,\Psi$ from previous sections. 
Let $\tau>0$ be a fixed positive number and $\varrho>0$ a fixed dyadic number; their values will be small and they will be chosen later. 

Take a measurable set $A\subseteq[0,1]^2$ with $|A|\geq \delta$. We write
\[ f:=\mathbbm{1}_A \quad\text{and}\quad g:= \mathbbm{1}_{[0,1]^2} - f. \] 
If we take an index $j$ such that
\begin{equation}\label{eq:howlargeisj}
j > J_0 := \Big\lceil\frac{1}{2} \log_2 \frac{\mathop{\textup{diam}}\Gamma}{\tau}\Big\rceil,
\end{equation}
then
\[ b_j \frac{\mathop{\textup{diam}}\Gamma}{2} 
\leq 2^{-2j} \mathop{\textup{diam}}\Gamma < \tau, \]
so for every $(x,y)\in A\cap[\tau,1-\tau]^2$ and $t\in[c_j,b_j]$ we have
\[ \big(\tilde{\sigma}_t\ast \mathbbm{1}_{[0,1]^2} \big)(x,y) = \sigma_t(\mathbb{R}^2) = \sigma(\mathbb{R}^2) = 1. \]
For such points $(x,y)$ the assumption \eqref{eq:convo2} then implies
\[ f(x,y) \sup_{t\in[c_j,b_j]} \big(\tilde{\sigma}_t\ast g \big)(x,y) = 1, \]
which in turn leads to a lower bound
\begin{align}
\int_{\mathbb{R}^2}f \cdot  \sup_{t\in [c_j,b_j]}  (\tilde{\sigma}_t\ast g) 
& \geq \int_{A\cap[\tau,1-\tau]^2} f \cdot \sup_{t\in [c_j,b_j]} \big(\tilde{\sigma}_t\ast g \big) \nonumber \\
& = |A\cap[\tau,1-\tau]^2| 
\geq |A|-4\tau 
= \int_{\mathbb{R}^2} f -4\tau, \label{taubelow}
\end{align}
provided $j$ is chosen large enough that \eqref{eq:howlargeisj} holds.

Let $\varphi_t$ be the  Poisson kernel on $\R^2$, i.e.,
\[{\varphi_t}(x,y) := \frac{t}{2\pi(t^2 + x^2+y^2)^{3/2}}\]
for every $t>0$, where the normalization is chosen such that $\int_{\R^2}\varphi_t = 1$. For a bounded measurable function $h$ we  will write 
\[ P_t h =  \varphi_t\ast h. \]
Also, for $k\in \mathbb{Z}$ let $\mathbb{E}_k$ denote the martingale averages with respect to the dyadic filtration, i.e.,  
\[\mathbb{E}_k h := \sum_{|Q|=2^{-2k}} \Big( |Q|^{-1}\int_Q h \, \Big ) \mathbbm{1}_Q, \]
where $h\in \textup{L}^1_{\textup{loc}}(\R^2)$ and 
the sum is taken over all dyadic squares $Q$ in $\R^2$ of area $2^{-2k}$ (and sidelength $2^{-k}$).

Take $t\in [c_j,b_j]$ and $k_j = -\log_2(\varrho c_j)$, which is an integer. 
We decompose 
\begin{align*}
\tilde{\sigma}_t  \ast g  
& =  (\tilde{\sigma}_t  \ast g -  \tilde{\sigma}_t \ast \mathbb{E}_{k_j} g    ) \\
& \quad   +\, (\tilde{\sigma}_t  \ast \mathbb{E}_{k_j} g -  \tilde{\sigma}_t \ast P_{\varrho c_j}  g    )
    +\,  
( \tilde{\sigma}_t \ast P_{\varrho c_j}  g    -  \tilde{\sigma}_t \ast P_{\varrho^{-1} b_j}   g ) \\
& \quad   + \, ( \tilde{\sigma}_t \ast P_{\varrho^{-1} b_j}   g - P_{\varrho^{-1} b_j}   g  ) 
      + P_{\varrho^{-1} b_j}   g .
\end{align*}
Taking the triangle inequality and the supremum over $t$  gives
\begin{align}
\int f \cdot \sup_{t \in [c_j,b_j]}\, (\tilde{\sigma}_t  \ast g )  
& \leq \int f \cdot \sup_{t \in [c_j,b_j]}| \tilde{\sigma}_t  \ast ( g -    \mathbb{E}_{k_j}   g )     | \label{dec1} \\
& \quad + \int f \cdot \sup_{t \in [c_j,b_j]} |\tilde{\sigma}_t  \ast \mathbb{E}_{k_j} g -  \tilde{\sigma}_t \ast P_{\varrho c_j}  g    |  \label{dec2} \\
& \quad + \int f \cdot \sup_{t \in [c_j,b_j]} |  \tilde{\sigma}_t \ast P_{\varrho c_j}  g    -  \tilde{\sigma}_t \ast P_{\varrho^{-1} b_j}   g  | \label{dec3} \\
& \quad + \int f\cdot \sup_{t \in [c_j,b_j]} | \tilde{\sigma}_t \ast P_{\varrho^{-1} b_j}   g - P_{\varrho^{-1} b_j}   g  | \label{dec4} \\
& \quad + \int f \cdot   P_{\varrho^{-1} b_j}   g . \nonumber
\end{align}

We will estimate each of the terms separately, using H\"older's inequality.
For the first term on the right-hand side of \eqref{dec1} 
  we will use the bound
  \begin{equation}
      \label{firstmainbd}
        \Big\| \sup_{t\in[c_j,1)} | \tilde{\sigma}_t\ast (g - \mathbb{E}_{k_j}  g)| \Big\|_{\textup{L}^p(\mathbb{R}^2)} \leq C_1 \varrho^\alpha \|g\|_{\textup{L}^p(\mathbb{R}^2)}
  \end{equation}
whenever $p>2$, where $\alpha$ is a positive constant depending only on $p$. 
(Any fixed finite value of $p$ greater than $2$ will do.)
This bound will follow from the central estimate (10) in Bourgain's paper \cite{Bou86a},
which can be written in our notation as
\begin{equation}\label{eq:Bourgainsmagic}
\Big\|\sup_{t \in [2^{-n}, 2^{-n+1})}| \tilde{\sigma}_t \ast h |\Big\|_{\textup{L}^p(\mathbb{R}^2)}
\lesssim 2^{-\alpha(i-n)} \|h\|_{\textup{L}^p(\mathbb{R}^2)}
\end{equation}
whenever $\mathbb{E}_i h=0$, while $n\leq i$ are positive integers and $p,\alpha$ are as before.
Bourgain \cite[(10)]{Bou86a} actually formulated \eqref{eq:Bourgainsmagic} for the full arclength measure $\textup{d}\nu$, but the very same proof establishes it also for the smooth truncation $\Psi\,\textup{d}\nu$. In fact, Bourgain has already performed several decompositions of $\nu$ \cite[Sections 3--6]{Bou86a}, and an additional smooth angular finite decomposition of $\Gamma$ can be added freely to the proof of his upper bound \cite[(10)]{Bou86a}, making the proof insusceptible to a smooth truncation by $\Psi$.

In order to prove \eqref{firstmainbd}, let $d_j=-\log_2(c_j)$.
We split   
$[c_j,1)$ into dyadic intervals $[2^{-n}, 2^{-n+1})$, 
estimate the maximum in $n$ by the $\ell^p$-sum, 
write
\[ g-\mathbb{E}_{k_j}g = \sum_{m=0}^{\infty} \Delta_{m+k_j} g, \] 
where $\Delta_i  = \mathbb{E}_{i+1} - \mathbb{E}_{i}$, and use the triangle inequality, after which it suffices to show  
\begin{equation*}
 \bigg\|  \bigg ( \sum_{n=1}^{d_j}  \Big( \sum_{m=0}^\infty   \, \sup_{t \in [2^{-n}, 2^{-n+1})}| \tilde{\sigma}_t \ast  \Delta_{m+k_j} g |  \Big)^{p} \bigg) ^{1/p} \bigg\|_{\textup{L}^p(\mathbb{R}^2)} \lesssim \varrho^\alpha \|g\|_{\textup{L}^p(\mathbb{R}^2)}. 
\end{equation*}
The left-hand side can be rewritten as
\[ \bigg( \sum_{n=1}^{d_j} \Big\| \sum_{m=0}^\infty \sup_{t \in [2^{-n}, 2^{-n+1})} | \tilde{\sigma}_t \ast \Delta_{m+k_j} g | \Big\|_{\textup{L}^p(\mathbb{R}^2)}^p \bigg)^{1/p} \]
and then estimated by Minkowski's inequality with
\[ \leq \sum_{m=0}^\infty \Big(\sum_{n=1}^{d_j} \Big\|\sup_{t \in [2^{-n}, 2^{-n+1})}| \tilde{\sigma}_t \ast \Delta_{m+k_j} g |\Big\|_{\textup{L}^p(\mathbb{R}^2)}^p \Big)^{1/p}. \]
Finally, the inequality \eqref{eq:Bourgainsmagic} with $i=m+k_j$ bounds this by  
\begin{align*}
& \lesssim \sum_{m=0}^\infty\Big(\sum_{n=1}^{d_j} 2^{-p\alpha(m+k_j-n)} \|\Delta_{m+k_j}g\|_{\textup{L}^p(\mathbb{R}^2)}^p  \Big)^{1/p} \\
& \lesssim \sum_{m=0}^\infty\Big(\sum_{n=1}^{d_j} 2^{-p\alpha(m+k_j-n)} \|g\|_{\textup{L}^p(\mathbb{R}^2)}^p  \Big)^{1/p} \\
& \lesssim 2^{\alpha(d_j-k_j)}\|g\|_{\textup{L}^p(\mathbb{R}^2)}
= \varrho^\alpha\|g\|_{\textup{L}^p(\mathbb{R}^2)}, 
\end{align*}
as desired.

To control \eqref{dec2} and \eqref{dec3} 
we use   Bourgain's maximal estimate in the plane \cite[Theorem 1]{Bou86a}, 
\[ \Big\| \sup_{t\in (0,\infty)} |    \tilde{\sigma}_t \ast h  |  \Big\|_{\textup{L}^p(\mathbb{R}^2)} \lesssim \|h\|_{\textup{L}^p(\mathbb{R}^2)} \]
for $p>2$.
Here it gives
\begin{equation}
     \label{mfbd2}
    \Big\| \sup_{t\in [ c_j,b_j ]} |    \tilde{\sigma}_t \ast P_{\varrho c_j}  g    -  \tilde{\sigma}_t \ast P_{\varrho^{-1} b_j}   g  |  \Big\|_{\textup{L}^p(\mathbb{R}^2)} \leq C_2\| P_{\varrho^{-1} b_j}  g  - P_{\varrho c_j}  g \|_{\textup{L}^p(\mathbb{R}^2)}
\end{equation}
and 
\begin{equation}
     \label{mfbd3}
    \Big\| \sup_{t\in [ c_j,b_j ]} |   \tilde{\sigma}_t \ast \mathbb{E}_{k_j}   g  -  \tilde{\sigma}_t \ast P_{\varrho c_j}  g     |  \Big\|_{\textup{L}^p(\mathbb{R}^2)} \leq C_2\|  P_{\varrho c_j}  g - \mathbb{E}_{k_j}   g     \|_{\textup{L}^p(\mathbb{R}^2)} 
\end{equation}
for an absolute constant $C_2$. 
 
To estimate \eqref{dec4}, we claim  that  for each $(x,y)\in \R^2$,  $j$,   and  $t\leq b_j$, 
 \begin{equation}\label{smallt}
\big| \big(\tilde{\sigma}_t\ast P_{\varrho^{-1} b_j}  g \big)(x,y) - \big( P_{\varrho^{-1} b_j}  g \big)(x,y) \big| \leq C_3\varrho
\end{equation}
for some absolute constant $C_3$.
To see this, 
we first use that 
\[ \big| \big(\tilde{\sigma}_t\ast P_{\varrho^{-1} b_j}  g \big)(x,y) - \big( P_{\varrho^{-1} b_j}  g \big)(x,y) \big| 
\leq \big\| ( \tilde{\sigma}_t\ast \varphi_{\varrho^{-1} b_j} ) - \varphi_{\varrho^{-1} b_j} \big\|_{\textup{L}^1(\R^2)} \|g\|_{\textup{L}^\infty(\mathbb{R}^2)} \]
for each $(x,y)\in \R^2$.
Since  $\|g\|_{\textup{L}^\infty(\mathbb{R}^2)}\leq 1$, it only remains to bound, using \eqref{eq:convo1},
\begin{align*}
& \big\| ( \tilde{\sigma}_t\ast \varphi_{\varrho^{-1} b_j} ) - \varphi_{\varrho^{-1} b_j} \big\|_{\textup{L}^{1}(\R^2)} \\
& = \int_{\R^2} \bigg| \frac{1}{t}\int_{\R} \bigg( \varphi_{\varrho^{-1} b_j} \Big(x+u, y+\frac{u^\beta}{t^{\beta-1}}\Big) - \varphi_{\varrho^{-1} b_j}(x,y) \bigg) \psi\Big (\frac{u}{t} \Big) \,\textup{d}u \bigg| \,\textup{d}(x,y) \\
& \leq \int_{\R^2}  \frac{1}{t}\int_{\R} \Big | \varphi_1 \Big(x+\frac{ u \varrho}{ b_j}, y+\frac{u^\beta\varrho}{ b_j t^{\beta-1}}\Big) - \varphi_1(x,y)  \Big | \,\psi\Big (\frac{u}{t} \Big ) \,\textup{d}u \,\textup{d}(x,y), 
\end{align*}
where we also changed variables in $x,y$. 
By the mean value theorem, the last display is 
\[\leq \int_{\R^2}  \frac{1}{t}\int_{\R}  | \nabla \varphi_1 (z,w)  |\Big|\Big(\frac{u\varrho}{ b_j} , \frac{u^\beta \varrho}{ b_j t^{\beta-1}} \Big)\Big| \,\psi\Big( \frac{u}{t} \Big) \,\textup{d}u \,\textup{d}(x,y) \]
for
\[ (z,w)=a(x,y) + (1-a)\Big(x+\frac{u \varrho}{ b_j}, y+\frac{u^\beta \varrho}{ b_j t^{\beta-1}}\Big) \]
and some $0<a<1$. This is further bounded by
\[\lesssim  \int_{\R^2}  \frac{1}{t}\int_{\R}  \big(1+|(x,y)|^2\big)^{-3/2} \Big |\Big (\frac{u\varrho}{ b_j} , \frac{u^\beta \varrho}{ b_j t^{\beta-1}} \Big )\Big | \,\psi\Big (\frac{u}{t} \Big ) \,\textup{d}u \,\textup{d}(x,y) , \] 
where we also used  $|u|\lesssim t\leq b_j<1$, and dominated a non-centered $|\nabla \varphi_1|$ by a centered integrable function.   
Integrating in $u$ and $(x,y)$ we obtain a bound by
$C_3\varrho$.

Therefore, using \eqref{taubelow} to obtain a   lower bound, estimates \eqref{firstmainbd}, \eqref{mfbd2}, \eqref{mfbd3}, \eqref{smallt} for upper bounds, and H\"older's inequality, we obtain
\begin{align}
\int_{\R^2} f - 4\tau 
\leq  C_1\varrho^\alpha   + C_2\|   P_{\varrho c_j}  g  - \mathbb{E}_{k_j} g \|_{\textup{L}^p(\mathbb{R}^2)} +  C_2\| P_{\varrho^{-1} b_j}  g  - P_{\varrho c_j}  g \|_{\textup{L}^p(\mathbb{R}^2)} & \nonumber \\
+ \, C_3 \varrho + \int_{\R^2} f \cdot  P_{\varrho^{-1} b_j}   g & ,  \label{eq:almostthere}
\end{align}
provided $j$ is large enough.

Next, 
\[\int_{\R^2} f \cdot   P_{\varrho^{-1} b_j}   g  = \int_{\R^2} f \cdot  P_{\varrho^{-1} b_j}   \mathbbm{1}_{[0,1]^2}  - \int_{\R^2} f \cdot   P_{\varrho^{-1} b_j}   f  \]
and we have 
\begin{equation}
    \label{est1}
     \int_{\R^2} f \cdot   P_{\varrho^{-1} b_j}   \mathbbm{1}_{[0,1]^2}    \leq \int_{\R^2} f
\end{equation}
and
\begin{equation}
    \label{est2}
     \int_{\R^2} f \cdot   P_{\varrho^{-1} b_j}   f \geq c_0 \Big( \int_{\R^2} f \Big )^2
\end{equation}
for some absolute constant $c_0>0$.
The estimate \eqref{est1} follows by the trivial $\textup{L}^\infty$ bound for the convolution. 
To see \eqref{est2}, we note that by the Cauchy-Schwarz inequality, for any $k\in \mathbb{Z}$,  
\[ \int_{\R^2} f \cdot \mathbb{E}_k f  \geq \Big( \int_{\R^2}  f \Big)^2  \]
 Then it remains to bound the martingale averages from above by the Poisson averages.   
The reader can find the details in the proof of Lemma~2.1 in \cite{DGR19}.
Therefore, from \eqref{eq:almostthere} and $\int_{\R^2}f=|A|\geq\delta$ we get
\begin{equation}
\label{mainbound}
c_0 \delta^2 - 4\tau 
\leq C_1\varrho^\alpha   + C_3\varrho
+ C_2\|   P_{\varrho c_j}  g  - \mathbb{E}_{k_j} g \|_{\textup{L}^p(\mathbb{R}^2)} +   C_2\| P_{\varrho^{-1} b_j}  g  - P_{\varrho c_j}  g \|_{\textup{L}^p(\mathbb{R}^2)},
\end{equation}
which will turn out useful provided that $\tau$ is small enough.

Furthermore, we claim that for $p>2$ and for any $J>J_0$ we have 
\begin{equation}
    \label{sqfct}
    \sum_{j=J_0+1}^J \| P_{\varrho^{-1} b_j}  g  - P_{\varrho c_j}  g \|_{\textup{L}^p(\mathbb{R}^2)}^p  \leq C_4 \big(\log_2\varrho^{-1}\big)^p \|g\|^p_{\textup{L}^p(\mathbb{R}^2)} \leq C_4 \big(\log_2\varrho^{-1}\big)^p
\end{equation}
and
\begin{equation}
    \label{sqfct2}
    \sum_{j=J_0+1}^J \|   P_{\varrho c_j}  g - \mathbb{E}_{k_{j}}g \|_{\textup{L}^p(\mathbb{R}^2)}^p  \leq C_4 \|g\|^p_{\textup{L}^p(\mathbb{R}^2)} \leq C_4
\end{equation}
with the constant $C_4$ independent of $J_0,J$.  
These will be consequences of boundedness on $\textup{L}^p(\R^2)$, $1<p<\infty$, of the square functions
\[S_1 h := \Big( \sum_{i\in \mathbb{Z}} \big|P_{2^{-i+1}} h - P_{2^{-i}} h \big|^2 \Big)^{1/2} \]
and 
\[ S_2 h := \Big( \sum_{i\in \mathbb{Z}} \big|P_{2^{-i}}  h - \mathbb{E}_{i} h\big|^2 \Big)^{1/2}. \]
Bound for $S_1$ follows from the classical Calder{\'o}n-Zygmund theory \cite[Subsections 6.1.3]{Gra14}, 
while boundedness of $S_2$ was proven by Jones, Seeger, and Wright \cite[Sections 3--4]{JSW08}.
In fact, the emphasis of the paper \cite{JSW08} was on more general dilation structures and more general martingales, while the square function estimate from the last display is essentially due to Calder\'{o}n; see \cite[Subsection~6.4.4]{Gra14}.
Now, \eqref{sqfct} follows by recalling $p>2$ and writing
{\allowdisplaybreaks
\begin{align*}
\bigg( \sum_{j=J_0+1}^J \| P_{\varrho^{-1} b_j}  g  - P_{\varrho c_j}  g \|_{\textup{L}^p(\mathbb{R}^2)}^p \bigg)^{1/p} 
& \leq (2\log_2\varrho^{-1}+1) \bigg( \sum_{i\in\mathbb{Z}} \| P_{2^{-i+1}}  g  - P_{2^{-i}}  g \|_{\textup{L}^p(\mathbb{R}^2)}^p \bigg)^{1/p} \\
& \lesssim (\log_2\varrho^{-1}) \bigg\| \Big( \sum_{i\in\mathbb{Z}} | P_{2^{-i+1}}  g  - P_{2^{-i}} g |^p \Big)^{1/p} \bigg\|_{\textup{L}^p(\mathbb{R}^2)} \\
& \leq (\log_2\varrho^{-1}) \|S_1 g\|_{\textup{L}^p(\mathbb{R}^2)}
\lesssim (\log_2\varrho^{-1}) \|g\|_{\textup{L}^p(\mathbb{R}^2)} .
\end{align*}
}
Similarly we deduce \eqref{sqfct2}:
\[ \bigg( \sum_{j=J_0+1}^J \|   P_{\varrho c_j}  g - \mathbb{E}_{k_{j}}g \|_{\textup{L}^p(\mathbb{R}^2)}^p \bigg)^{1/p} \leq \|S_2 g\|_{\textup{L}^p(\mathbb{R}^2)} \lesssim \|g\|_{\textup{L}^p(\mathbb{R}^2)}. \]

To be completely determined, one can simply take $p=3$.
From \eqref{sqfct} and \eqref{sqfct2} we conclude that there exists  $j\in \{J_0+1,\ldots, J\}$ such that 
\[ \| P_{\varrho^{-1} b_j}  g  - P_{\varrho c_j}  g \|_{\textup{L}^3(\mathbb{R}^2)} , \|  P_{\varrho c_j}  g - \mathbb{E}_{k_j} g \|_{\textup{L}^3(\mathbb{R}^2)}   \leq (2C_4 (J-J_0)^{-1})^{1/3}\log_2\varrho^{-1}. \]
Together with \eqref{mainbound} applied for this particular $j$ and $\tau=c_0\delta^2/8$ we obtain
\[ 2^{-1}c_0\delta^2 \leq C_1\varrho^\alpha + C_3\varrho + 2 C_2(2C_4(J-J_0)^{-1})^{1/3} \log_2\varrho^{-1}, \]
i.e.,
\[ J - J_0 \lesssim \Big(\frac{\log_2\varrho^{-1}}{2^{-1}c_0\delta^2-C_1\varrho^\alpha-C_3\varrho}\Big)^3. \]
Now we recall that we actually chose $J_0$ in \eqref{eq:howlargeisj} at the beginning of the proof, which guarantees that $J_0\leq  \log_2(C_5 \delta^{-2})$ for a suitable constant $C_5$.
Taking $\varrho$ to be a small multiple of $\min\{\delta^{2/\alpha},\delta^2\}$
we obtain $J \leq \delta^{-C'}$ for a suitable constant $C'$. 


\section{Proofs of Theorems~\ref{thm:main1} and \ref{thm:main2}}

In this section we deduce the two main theorems from Proposition~\ref{prop:main}.
Once again, it is sufficient to consider $\beta\in(1,\infty)$.

\begin{proof}[Proof of Theorem~\ref{thm:main1}]
Set $J=\lfloor \delta^{-C'}\rfloor+1$, where $C'$ is the constant from Proposition~\ref{prop:main}.
Let us simply choose consecutive dyadic scales, $b_j=2^{-2j+1}$ and $c_j=2^{-2j}$ for every $1\leq j\leq J$.
By the contraposition of Proposition~\ref{prop:main} and using formula \eqref{eq:convo1} we conclude that there exist a point $(x,y)\in A$ and an index $1\leq j\leq J$ such that for every $c_j\leq t\leq b_j$ the set $A$ contains a point of the form
\begin{equation}\label{eq:pointparab2}
\Big(x + u, y + \frac{u^{\beta}}{t^{\beta-1}}\Big), \quad \eta t< u< \theta t.
\end{equation}
Substituting \eqref{eq:atsubst} we get
\begin{equation*}
c_j\leq t\leq b_j \quad\Longleftrightarrow\quad b_j^{1-\beta}\leq a\leq c_j^{1-\beta},
\end{equation*}
which now means that for every
\[ a \in I := \big[ 2^{(\beta-1)(2j-1)}, 2^{(\beta-1)2j} \big] \]
there exists
\[ \eta a^{-1/(\beta-1)} < u < \theta a^{-1/(\beta-1)} \]
such that $(x+u,y+au^\beta)\in A$. 
Observing 
\begin{align*}
& \inf I \geq 1, \\
& \sup I \leq 2^{2(\beta-1)J} \leq 2^{4(\beta-1)\delta^{-C'}}, \\
& |I| \geq 2^{\beta-1} - 1,
\end{align*}
and that any such $u$ satisfies
\begin{align*}
& u \geq \eta 2^{-2j} \geq \eta 2^{-2J} \geq \eta 2^{-2\delta^{-C'}}, \\
& u \leq \theta 2^{-2j+1} \leq \theta
\end{align*}
we finally establish Theorem~\ref{thm:main1}.
\end{proof}

\begin{proof}[Proof of Theorem~\ref{thm:main2}]
Suppose that the claim does not hold for some measurable set $A\subseteq\mathbb{R}^2$ with $\overline{\delta}(A)>0$. 
Take $\delta:=\overline{\delta}(A)/2$ and $J=\lfloor \delta^{-C'}\rfloor+1$, where $C'$ is the constant from Proposition~\ref{prop:main}.
Inductively we construct positive numbers
\begin{equation*}
C_1 > B_1 > C_2 > B_2 > \cdots > C_J > B_J
\end{equation*}
satisfying $C_{j+1}\leq B_j/8^{\beta-1}$ and such that for each $j\geq1$ and every point $(x,y)\in A$ there exists $a\in[B_j,C_j]$ with the property that $A$ does not contain a point of the form 
\[ (x+u, y+au^\beta), \quad u>0. \]
After the change of variables \eqref{eq:atsubst} we see that for each $j\geq1$ and every point $(x,y)\in A$ there exists
\[ C_j^{-1/(\beta-1)} \leq t \leq B_j^{-1/(\beta-1)} \]
such that $A$ does not contain a point of the form \eqref{eq:pointparab2}, so 
\[ \big(\tilde{\sigma}_t \ast \mathbbm{1}_A\big)(x,y)=0. \]
By the definition of the upper Banach density there exist a number $R\geq B_J^{-1/(\beta-1)}$ and a point $(x_0,y_0)\in\mathbb{R}^2$ such that
\[ \big| A\cap \big([-R,R]^2 + (x_0,y_0)\big) \big| \geq \delta\cdot 4R^2. \]
Define
\begin{equation}\label{eq:primeset}
A' := \bigg( \frac{1}{2R} \big( A - (x_0,y_0) \big) + \Big(\frac{1}{2},\frac{1}{2}\Big) \bigg) \cap [0,1]^2
\end{equation}
and let $b_j$ and $c_j$ respectively be the number $B_{J+1-j}^{-1/(\beta-1)}/2R$ rounded up to the nearest dyadic number and the number $C_{J+1-j}^{-1/(\beta-1)}/2R$ rounded down to the nearest dyadic number, i.e.,
\begin{equation}\label{eq:primebscs}
b_j := 2^{\lceil \log_2(B_{J+1-j}^{-1/(\beta-1)}/2R)\rceil}, \quad c_j := 2^{\lfloor\log_2( C_{J+1-j}^{-1/(\beta-1)}/2R)\rfloor}
\end{equation}
for every $1\leq j\leq J$.
Finally, for every $(x,y)\in A'$ and every $1\leq j\leq J$ this implies
\[ \big(\tilde{\sigma}_t \ast \mathbbm{1}_{A'}\big)(x,y)=0 \]
for some $c_j\leq t\leq b_j$, while we have chosen $J$ so that $J>\delta^{-C'}$. 
Note that also $|A'|\geq\delta$, so the set \eqref{eq:primeset} and the numbers \eqref{eq:primebscs} violate Proposition~\ref{prop:main}, which leads us to a contradiction.
\end{proof}

 
\section*{Acknowledgments}
P. D. is partially supported by the NSF grant DMS-2154356. P. D. and V. K. were supported by the NSF grant DMS-1929284 while the authors were in residence at the Institute for Computational and Experimental Research in Mathematics in Providence, RI, during the Harmonic Analysis and Convexity program. 
V. K. and M. S. are partially supported by the Croatian Science Foundation project UIP-2017-05-4129 (MUNHANAP).
M. S. is supported  by a fellowship through the Grand Challenges Initiative at Chapman University.


\bibliography{parabolae}{}
\bibliographystyle{plain}

\end{document}